\numberwithin{equation}{section}
\theoremstyle{definition}
\newtheorem{defn}{Definition}[section]
\theoremstyle{definition}
\newtheorem{prop}[defn]{Proposition}
\newtheorem{theor}[defn]{Theorem}
\newtheorem{obse}[defn]{Observation}
\theoremstyle{remark}
\title{\textbf{Open, convex, unbounded sets in normed spaces.}}
\author{D. Moshonas, V. Nestoridis, A. Terezakis}
\begin{document}

\maketitle{}

\fontsize{11}{11}\selectfont

%Abstract..........................................................................................
\begin{center}
\vspace*{\stretch{1}}
\textbf{Abstract}
\vspace*{\stretch{1}}
\end{center}
Let X be a normed linear space. We examine if every open, convex and unbounded subset of X is equal to the union of a family of open straight half lines. The answer is affirmative if and only if X is finite dimensional.

%\tableofcontents

\section{Introduction} 
The inequality of Kolmogorov (\cite{Kolmogorov} ) implies that if $f$ is defined on a half-line, say $\left( 0, +\infty \right)$, and the derivatives $f^{(a)},f^{(b)}$, $a<b$, are bounded, then each intermediate derivative $f^{(l)}$, $a<l<b$ is also bounded. This fact was used in \cite{Nestoridis}  to prove that if a domain $\Omega \subset \mathbb{C}$ is the union of open half-lines then $H_F^{\infty}(\Omega)=H_{\tilde{F}}^{\infty}(\Omega)$. Here $F$ is a subset of $\{0,1,2,\dots\}$ and the space $H_F^{\infty}(\Omega)$ is the set of holomorphic in $\Omega$ functions f such that every derivative $f^{(l)}$, $l\in F$ is bounded in $\Omega$. And $\tilde{F}=\{l\in \{0,1,2,\dots\}: \exists b\in F \text{ with } minF\leq l \leq b\}$.  Furthermore, in \cite{Nestoridis} they were interested to unbounded convex domain in the plane $\mathbb{C}$, for which $H_F^{\infty}(\Omega)=H_{\tilde{F}}^{\infty}(\Omega)$. Thus, they stated and proved the following

\begin{theor}\label{0} Every open, convex, and unbounded subset of $\mathbb{R}^2$ contains an open straight half-line, and in fact it is equal to a union of such half-lines. Furthermore, we can obtain that these half-lines are parallel to each other.
\end{theor}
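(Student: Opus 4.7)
The plan is to extract a single common direction $v \in \mathbb{R}^2$ from the unboundedness of $\Omega$ and then show, using openness and convexity, that every point of $\Omega$ sits on an open half-line in direction $v$ contained in $\Omega$. Concretely, since $\Omega$ is unbounded we pick $y_n \in \Omega$ with $\|y_n\| \to \infty$; since the unit circle in $\mathbb{R}^2$ is compact, a subsequence of $y_n/\|y_n\|$ converges to some unit vector $v$. The claim I will establish is that for every $x_0 \in \Omega$ the closed ray $\{x_0 + s v : s \ge 0\}$ is contained in $\Omega$ (in fact, a tube of positive radius around it).

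To prove the ``tube'' claim, I would fix $x_0 \in \Omega$, pick $r > 0$ with $B(x_0,r) \subset \Omega$, and use convexity in the standard cone form: for each $n$ and every $t \in [0,1]$,
\[
B\bigl((1-t)x_0 + t y_n,\, (1-t)r\bigr) \;\subset\; \Omega,
\]
since any point of this ball is a convex combination of a point of $B(x_0,r)$ and $y_n$. For a fixed $s \ge 0$, choosing $t_n = s/\|y_n - x_0\|$ makes the center $(1-t_n)x_0 + t_n y_n = x_0 + t_n(y_n - x_0)$ converge to $x_0 + s v$ while $(1-t_n)r \to r$. A direct $\varepsilon$-estimate then shows $B(x_0 + s v, \rho) \subset \Omega$ for every $\rho < r$, and hence the whole ray $\{x_0 + s v : s \ge 0\}$ lies in $\Omega$.

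Having this, I pass from a closed ray to an open half-line as follows: given $x \in \Omega$, openness gives $\varepsilon > 0$ with $x - \varepsilon v \in \Omega$, and applying the tube claim to the base point $x - \varepsilon v$ shows that the open half-line $\{x + t v : t > -\varepsilon\}$ is contained in $\Omega$ and passes through $x$. Taking the union of these open half-lines over all $x \in \Omega$ expresses $\Omega$ as a union of open straight half-lines, all parallel to the common direction $v$, which is exactly the assertion of the theorem.

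\textbf{Main obstacle.} The delicate step is the limiting argument in the tube claim: one has to interchange the ``convex combination with $y_n$'' construction with the passage $n \to \infty$ while retaining open (rather than merely closed) inclusion in $\Omega$. The key is that the admissible radius $(1-t_n)r$ tends up to $r$, so any strictly smaller radius $\rho < r$ is eventually admissible; without this gain one would only obtain the ray in $\overline{\Omega}$. I also note that Step~1 is the only place where the finite dimensionality of $\mathbb{R}^2$ is used (through compactness of the unit sphere), which foreshadows the infinite-dimensional counterexample promised by the abstract.
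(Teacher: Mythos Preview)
Your argument is correct and rests on the same two ingredients the paper uses: compactness of the unit circle to extract a limiting direction $v$ from $y_n/\|y_n\|$, and convexity together with openness to produce half-lines in direction $v$. The organization differs, however. The paper factors the proof into two separate pieces: first it shows (Theorem~2.3, $(i)\Rightarrow(ii)$) that, after translating so that $0\in\Omega$, the single ray $\{tu_0:t>0\}$ lies in $\Omega$ via the midpoint trick $tu_0=\tfrac12 b_n+\tfrac12(2tu_0-b_n)$ with $b_n=2tu_n$; and then it proves separately (Proposition~2.1) that once one half-line in direction $u_0$ lies in $\Omega$, every point of $\Omega$ emits a parallel half-line in $\Omega$. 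Your cone estimate $B\bigl((1-t)x_0+ty_n,(1-t)r\bigr)\subset\Omega$ collapses these two steps into one and even yields the stronger conclusion that an $r$-tube about each such ray is contained in $\Omega$; the paper's route, on the other hand, isolates Proposition~2.1 as a dimension-free statement that it reuses later (via Observation~2.2) in the infinite-dimensional constructions. Your closing remark that compactness of the sphere is the only place finite-dimensionality enters is exactly the point the paper makes in passing from Theorem~2.3 to Theorem~2.4.
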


The proof of Theorem \ref{0} in \cite{Nestoridis} can easily transferred  to any finite dimensional space $\mathbb{R}^n$. In the present paper we show that in any infinite dimensional normed vector space there is an open convex and unbounded subset $\Omega$ which does not contain any straight half-line. Therefore, Theorem \ref{0} remains valid for a normed linear space $X$ instead of $\mathbb{R}^2$, if and only if, $X$ is finite dimensional.
In our proof, the fact that $\Omega$ contains a half-line and the fact that $\Omega$ is a union of such half-lines are proven to be equivalent for every normed space. We also treat first the separable case and then the general one. In the separable case we use the existence of a biorthogonal system (\cite{Lindenstrauss} ).

%\section{Preliminaries}

\section{Basic properties}

\begin{prop}\label{oneforall} Let $X$ be a normed linear space and let $\Omega$ be an open, convex and unbounded subset of $X$. Then the set $\Omega$ is a union of open half-lines if and only if $\Omega$ contains a half-line. We can also choose the half-lines to be parallel to each other.
\end{prop}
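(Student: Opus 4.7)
The implication ``$\Omega$ is a union of open half-lines $\Rightarrow$ $\Omega$ contains an open half-line'' is immediate, since $\Omega$ is non-empty. For the non-trivial direction, together with the parallelism statement, the plan is to establish the following strengthening: \emph{if $\Omega$ contains an open half-line $\ell = \{x_0 + tv : t > 0\}$, then for every $y \in \Omega$ and every $s > 0$ one has $y + sv \in \Omega$}. Once this is proved, each $y \in \Omega$ lies on an open half-line of $\Omega$ parallel to $v$: by openness of $\Omega$ choose $\varepsilon > 0$ with $y - \varepsilon v \in \Omega$, and apply the strengthening to $y - \varepsilon v$ in place of $y$; the resulting open half-line $\{(y - \varepsilon v) + sv : s > 0\} \subset \Omega$ passes through $y$ (at $s = \varepsilon$). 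Taking the union over $y \in \Omega$ exhibits $\Omega$ as a union of pairwise parallel open half-lines.

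The heart of the matter is a single convex-combination identity. Fix $y \in \Omega$ and $s > 0$, and pick $r > 0$ with the open ball $B(y, r) \subset \Omega$. For a parameter $\lambda \in (0, 1)$ to be chosen, set
\[
y_\lambda \defeq y - \frac{\lambda}{1-\lambda}(x_0 - y), \qquad t_\lambda \defeq \frac{s}{\lambda}.
\]
A direct computation, expanding and using $\lambda t_\lambda = s$, yields
\[
(1-\lambda)\, y_\lambda + \lambda\,(x_0 + t_\lambda v) \;=\; y + sv.
\]
Now $\|y_\lambda - y\| = \frac{\lambda}{1-\lambda}\|x_0 - y\|$, so whenever $\lambda < r/(r + \|x_0 - y\|)$ we have $y_\lambda \in B(y, r) \subset \Omega$; simultaneously $t_\lambda > 0$ gives $x_0 + t_\lambda v \in \ell \subset \Omega$. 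Convexity of $\Omega$ then forces $y + sv \in \Omega$, which proves the strengthened claim.

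I do not anticipate any serious obstacle: the entire argument collapses to writing $y + sv$ as an explicit convex combination of a point near $y$ and a suitably far point of $\ell$, and the openness step at $y$ to upgrade a closed half-line $\{y + sv : s \ge 0\}$ to an open one through $y$. Notably, the proof uses nothing beyond convexity and openness, so it applies unchanged in any normed space, which is exactly what is needed for the sequel.
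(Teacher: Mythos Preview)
Your proof is correct and follows the same overall strategy as the paper's: reduce to showing that the direction $v$ of the given half-line is a recession direction for $\Omega$ (i.e.\ $y+sv\in\Omega$ for every $y\in\Omega$ and $s>0$), and then use openness at each $y$ to place $y$ on an \emph{open} half-line in that direction. The only difference is in the execution of the key step: the paper argues by contradiction (assuming some $a_0=z_0+t_0u_0\notin\Omega$) and invokes the limit $\dfrac{tu_0-a_0}{\|tu_0-a_0\|}\to u_0$ to locate parameters making $a_0$ a convex combination of a point of $B(z_0,\delta)$ and a far point of the half-line, whereas your explicit identity $(1-\lambda)y_\lambda+\lambda(x_0+t_\lambda v)=y+sv$ achieves the same end directly and more transparently, with no contradiction or limiting argument needed.
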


\begin{proof}
\begin{description}

\item{$(\Longrightarrow)$} Trivial.

\item{$(\Longleftarrow)$} Let $\Omega$ be an open, convex, unbounded subset of $X$, which contains a half-line. Without loss of generality we suppose, that $0\in \Omega$ and that the half-line starts from zero. In other words, there is $u_0\in X$, with $\|u_0\|=1$ such that $\{tu_0, t>0\}\subseteq \Omega$. 

\item{\textbf{Assertion.}} If $z\in \Omega$ then $\{z+tu_0, t>0\} \subseteq \Omega$.\\
If the assertion is true, we have

\begin{equation} \label{<} \bigcup_{z\in \Omega}\{z+tu_0, t>0\}\subseteq \Omega. \end{equation}

On the other hand, for an arbitrary $z_0\in \Omega$ there is $\delta>0$ such that $B(z_0,\delta)\subseteq \Omega$ hence, $z_0-(\delta/2)u_0\in \Omega$ and obviously $z_0\in \{(z_0-(\delta/2)u_0)+tu_0, t>0\}$, thus

\begin{equation*} z_0\in\bigcup_{z\in \Omega}\{z+tu_0, t>0\}. \end{equation*} 

Since $z_0$ was an arbitrary element of $\Omega$, we have 

\begin{equation} \label{>} \Omega \subseteq \bigcup_{z\in \Omega}\{z+tu_0, t>0\}. \end{equation}

Now the relations (\ref{<}) and (\ref{>}) provide the result.

\item{Proof of the assertion.}
Let $z_0\in \Omega$ and assume that there exists $t_0>0$ such that $a_0=z_0+t_0u_0\notin \Omega$. Since the set $\Omega$ is open, there is $\delta>0$ such that $B(z,\delta)\subseteq \Omega$. First we will show that there exist $t>0$ and $\lambda=\lambda(t)<0$ such that
\begin{equation}
\label{central} \xi_{t,\lambda}=(1-\lambda)a_0+\lambda(tu_0)\in B(z_0, \delta) \text{; that is } \|(1-\lambda)a_0+\lambda(tu_0)- z_0\|<\delta.
\end{equation}

We know that

\begin{equation*}
\frac{tu_0-a_0}{\|tu_0-a_0\|} \longrightarrow \frac{u_0}{\|u_0\|}=u_0, \text{ as } t\longrightarrow +\infty,
\end{equation*}

and thus, there exists $t_1>0$ such that

\begin{equation*}\left\Vert\frac{tu_0-a_0}{\|tu_0-a_0\|}-u_0\right\Vert<\frac{\delta}{2t_0} \text{ for all } t\geq t_1.
\end{equation*}

Now we rewrite the point $\xi_{t, \lambda}$, as:

\begin{equation*}\xi_{t,\lambda}=(1-\lambda)a_0 +\lambda(tu_0)=a_0+\lambda\left(tu_0-a_0\right)
\end{equation*}

For $t=t_1>0$ and $\lambda=-t_0\left(\|t_1u_0-a_0\|\right)^{-1} <0 $ we have

\begin{equation*}
\left\Vert\xi_{t,\lambda}-z_0 \right\Vert = \left\Vert a_0+\left(-t_0\right)\frac{t_1u_0-a_0}{\|t_1u_0-a_0\|}-z_0\right\Vert=\left\Vert t_0u_0+\left(-t_0\right)\frac{t_1u_0-a_0}{\|t_1u_0-a_0\|}\right\Vert=
\end{equation*}

\begin{equation*}=t_0\left\Vert \frac{t_1u_0-a_0}{\|t_1u_0-a_0\|}-u_0\right\Vert<t_0\frac{\delta}{2t_0}=\frac{\delta}{2}<\delta
\end{equation*}

Therefore, we found $t>0$ and $\lambda<0$ such that $\xi_{t,\lambda}\in B(z_0,\delta)\subseteq \Omega$. We can now write the $a_0$ as convex combination of elements of $\Omega$.

\begin{equation*}
\xi_{t,\lambda}=(1-\lambda)a_0+\lambda(tu_0) \text{ therefore, } a_0=\frac{1}{1-\lambda}\xi_{t,\lambda}+\frac{-\lambda}{1-\lambda}(tu_0)
\end{equation*}

and $\lambda<0$ hence,
\begin{equation*} 0<\frac{1}{1-\lambda}, \ \frac{-\lambda}{1-\lambda}<1.
\end{equation*}

Since the $\Omega$ is a convex set, we conclude that $a_0\in \Omega$, which is a contradiction.

\end{description}
\end{proof}

\begin{obse}\label{indip} Let $\Omega$ be a set as previously mentioned and $A\in \Omega$, if\\
$L(A,\Omega)=\{u\in X \ , \ \|u\|=1 \text{ and } \{A+tu, t>0\}\subseteq \Omega\}$.\\
Then the set $L(A,\Omega)$ is independent of $A$.
\end{obse}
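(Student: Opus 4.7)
The plan is to derive this observation as a direct consequence of the assertion established inside the proof of Proposition \ref{oneforall}. That assertion says: if $\Omega$ is an open, convex, unbounded set containing $0$ and containing the half-line $\{tu_0,\,t>0\}$, then for every $z\in\Omega$ the translated half-line $\{z+tu_0,\,t>0\}$ is also contained in $\Omega$. The independence from the base point $A$ is essentially a translation-invariant rewriting of this fact, and I would prove the observation by showing $L(A,\Omega)\subseteq L(B,\Omega)$ for arbitrary $A,B\in\Omega$, with the reverse inclusion following by symmetry.

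First, I would fix $A,B\in\Omega$ and take an arbitrary $u\in L(A,\Omega)$, so that $\|u\|=1$ and $\{A+tu,\,t>0\}\subseteq\Omega$. I would then reduce to the setting of Proposition \ref{oneforall} by translation: set $\Omega'\defeq\Omega-A$. Then $\Omega'$ is open, convex, unbounded, contains $0$, and contains the half-line $\{tu,\,t>0\}$, which plays the role of the half-line $\{tu_0,\,t>0\}$ in the proposition.

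Now I would apply the assertion proved in Proposition \ref{oneforall} with the choice $z\defeq B-A\in\Omega'$. It gives $\{(B-A)+tu,\,t>0\}\subseteq\Omega'$, and translating back by $+A$ yields $\{B+tu,\,t>0\}\subseteq\Omega$, hence $u\in L(B,\Omega)$. Thus $L(A,\Omega)\subseteq L(B,\Omega)$; exchanging the roles of $A$ and $B$ gives the opposite inclusion, and therefore $L(A,\Omega)=L(B,\Omega)$ for all $A,B\in\Omega$.

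There is no real technical obstacle here: the substantive work is done inside Proposition \ref{oneforall}, and the only subtlety is the harmless reduction to a half-line emanating from $0$ via the translation $A\mapsto 0$. Once that translation is performed, the observation is immediate.
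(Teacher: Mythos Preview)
Your proposal is correct and is precisely the argument the paper has in mind: the paper's own proof consists of the single line ``It is evident from the proof of Proposition~\ref{oneforall},'' and you have simply unpacked that remark by translating so that the assertion in Proposition~\ref{oneforall} applies verbatim.
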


\begin{proof} It is evident from the proof of the Proposition \ref{oneforall}.
\end{proof}

\begin{theor}\label{ithusd} Let $\Omega$ be an open, convex and unbounded subset of a normed space. Then the following are equivalent.
\begin{description}
\item{$(i)$} There is a sequence $w_n$ in $\Omega$ such that $\|w_n\|\longrightarrow +\infty$, and the sequence $w_n/\|w_n\|$ has a limit point.
\item{$(ii)$} The set $\Omega$ contains a half-line.
\item{$(iii)$} The set $\Omega$ is a union of parallel half-lines.
\item{$(iv)$} The set $\Omega$ is a union of half-lines.
\end{description}

\end{theor}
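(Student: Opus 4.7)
The plan is to close the cycle $(i) \Rightarrow (ii) \Rightarrow (iii) \Rightarrow (iv) \Rightarrow (ii) \Rightarrow (i)$ with only one genuinely nontrivial implication. Three of the links are essentially free: $(iii) \Rightarrow (iv)$ is obvious; $(iv) \Rightarrow (ii)$ holds because $\Omega$ is nonempty, so the union contains at least one of its half-line summands; and $(ii) \Rightarrow (iii)$ is precisely Proposition \ref{oneforall}. For $(ii) \Rightarrow (i)$, given $\{x_0 + tu : t > 0\} \subseteq \Omega$ with $\|u\| = 1$, I would set $w_n := x_0 + nu$; then $\|w_n\| \to +\infty$ and a direct computation gives $w_n/\|w_n\| \to u$, supplying the required limit point.

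The real content is $(i) \Rightarrow (ii)$. First I would translate so that $0 \in \Omega$. This step preserves the hypothesis, because for any fixed $z_0 \in X$ one has
\begin{equation*}
\left\Vert \frac{w_n - z_0}{\|w_n - z_0\|} - \frac{w_n}{\|w_n\|} \right\Vert \longrightarrow 0,
\end{equation*}
since $\|w_n\| \to +\infty$ forces both the shift in the numerator and the variation in the denominator to vanish; hence $(w_n - z_0)/\|w_n - z_0\|$ and $w_n/\|w_n\|$ share the same set of limit points. After this reduction, pick $\delta > 0$ with $B(0, \delta) \subseteq \Omega$ and a subsequence with $w_{n_k}/\|w_{n_k}\| \to u$; automatically $\|u\| = 1$.

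I claim that $B(tu, \delta) \subseteq \Omega$ for every $t > 0$, which immediately yields the half-line $\{tu : t > 0\} \subseteq \Omega$ and proves $(ii)$. Fix $t > 0$ and choose $k$ large enough that $\|w_{n_k}\| > t$, so that $\lambda_k := t/\|w_{n_k}\| \in (0, 1)$ and $\lambda_k w_{n_k} = t \cdot w_{n_k}/\|w_{n_k}\| \longrightarrow tu$. By convexity,
\begin{equation*}
\lambda_k w_{n_k} + (1 - \lambda_k)\, B(0, \delta) \subseteq \Omega,
\end{equation*}
which is an open ball of radius $(1 - \lambda_k)\delta$ centered at $\lambda_k w_{n_k}$. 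For any $y$ with $\|y - tu\| < \delta$, writing $y = \lambda_k w_{n_k} + (1 - \lambda_k) b_k$ and solving for $b_k$ gives $b_k \to y - tu$, so $\|b_k\| < \delta$ eventually, and therefore $y \in \Omega$.

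I do not anticipate a serious obstacle beyond organising these steps. The only delicate point is the translation reduction to $0 \in \Omega$, which is dispatched by the asymptotic estimate above; after that, the convex-hull argument with $B(0,\delta)$ and the points $w_{n_k}$ turns the directional limit $u$ into an actual open ball inside $\Omega$ at every scale $tu$, and the half-line falls out.
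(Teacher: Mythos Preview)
Your proof is correct and follows essentially the same approach as the paper: reduce to $0\in\Omega$, pass to a subsequence with $w_{n_k}/\|w_{n_k}\|\to u$, and then use convexity together with openness at $0$ to place $tu$ in $\Omega$ for every $t>0$. The only cosmetic differences are that the paper realises $tu_0$ as the midpoint of $2tu_n\in\Omega$ and $2tu_0-2tu_n\in\Omega$ (rather than your convex combination of $w_{n_k}$ with a point of $B(0,\delta)$), and that you spell out the translation justification which the paper leaves implicit.
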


\begin{proof}
The implication $(ii)\Rightarrow (iii)$ has already been proven and the implications $(iii)\Rightarrow (iv) \Rightarrow (i)$ are obvious; thus, it remains to show that $(i)\Rightarrow (ii)$. Without loss of generality we assume that $0\in \Omega$. Due to the fact that $w_n/\|w_n\|$ has a limit point there exists a subsequence $k_n$ such that $u_n=w_{k_n}/\|w_{k_n}\|\longrightarrow u_0$. We fix now an arbitrary positive number $t$, and let $b_n=2t u_n$. Since $\|w_n\|\longrightarrow +\infty$, there exists $n_1\in \mathbb{N} \text{ such that } 2t<\|w_{k_n}\|$. This implies that $2t\|w_{k_n}\|^{-1}<1 \text{ and } b_n\in \Omega \text{, for all } n\geq n_1$. Also we know that $2t u_0-b_n\longrightarrow 0\in \Omega, \text{ as } n\longrightarrow +\infty$ and the set $\Omega$ is open; thus there is $n_2\in \mathbb{N}$ such that $2t u_0-b_n\in \Omega$, for all $n\geq n_2$. We choose $n\geq max\{n_1,n_2\}$ and we have $b_n, (2t u_0-bn)\in \Omega$. However, the set $\Omega$ is convex; hence the point $t u_0=b_n/2+(2t u_0-b_n)/2$ is in $\Omega$. Since $t$ was an arbitrary positive number, it follows that $\{t u_0, t >0\} \subseteq \Omega$, i.e. the set $\Omega$ contains a half-line.
\end{proof}

\begin{theor} \label{finite!} Let $X$ be a finite dimensional normed space. Then every open, convex unbounded subset of $X$, is a union of parallel half-lines.
\end{theor}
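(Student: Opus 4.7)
My plan is to reduce the theorem directly to the equivalence proved in Theorem \ref{ithusd}. Given Theorem \ref{ithusd}, it suffices to verify condition $(i)$: there exists a sequence $w_n \in \Omega$ with $\|w_n\| \to +\infty$ such that $w_n/\|w_n\|$ admits a limit point. Once $(i)$ is established, the conclusion that $\Omega$ is a union of parallel half-lines is immediate from the theorem.

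First, I would use the hypothesis that $\Omega$ is unbounded to pick any sequence $(w_n) \subseteq \Omega$ with $\|w_n\| \to +\infty$. By passing to a tail, we may assume each $w_n \neq 0$, so the normalized sequence $u_n \defeq w_n/\|w_n\|$ lies on the unit sphere $S_X = \{x \in X : \|x\| = 1\}$.

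The key point is that, since $X$ is finite dimensional, the unit sphere $S_X$ is compact. This is standard: all norms on a finite dimensional space are equivalent, so the closed unit ball (and hence $S_X$) is homeomorphic to the corresponding set in $\mathbb{R}^n$ with the Euclidean norm, which is closed and bounded, hence compact by Heine--Borel. Therefore $(u_n)$ has a convergent subsequence, which gives a limit point $u_0 \in S_X$ of the sequence $w_n/\|w_n\|$.

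Thus condition $(i)$ of Theorem \ref{ithusd} is satisfied, and the theorem yields that $\Omega$ is a union of parallel half-lines. There is no real obstacle here; the substantive content has already been absorbed into Theorem \ref{ithusd}, and the role of finite dimensionality is solely to guarantee sequential compactness of the unit sphere. This is precisely the property that will fail in infinite dimensional spaces, which is why the construction in the next part of the paper is needed.
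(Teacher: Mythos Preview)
Your proposal is correct and follows essentially the same argument as the paper: pick an unbounded sequence in $\Omega$, normalize it to the unit sphere, invoke compactness of the sphere in finite dimensions to obtain a limit point, and then apply Theorem~\ref{ithusd}. The only differences are cosmetic (you justify compactness of $S_X$ and note that one may assume $w_n\neq 0$), but the substance is identical.
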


\begin{proof} Let $w_n$ be a sequence in $\Omega$, such that $\|w_n\|\longrightarrow +\infty$. The sequence $w_n/\|w_n\|$ belongs to $S(0,1)=\{v\in X : \|v\|=1\}$. Since the $X$ is finite dimensional space the sphere $S(0,1)$ is compact, and hence the sequence $w_n/\|w_n\|$ has a limit point. Now the Theorem \ref{ithusd} yields the result.
\end{proof}

\section{The separable case} %NEWSECTION!!!!!!!!!!!

We need the following well known fact.

\begin{prop}\label{CBS} \cite{Lindenstrauss} Every separable Banach space has a sequence $\left(x_n\right)_{n\in \mathbb{N}}$ in X and a sequence $\left(x^*_n\right)_{n\in \mathbb{N}}$ in $X^*$, such that

\begin{description}
\item{$(i)$} $x^*_i(x_j)=\delta_{ij}$.
\item{$(ii)$} If $x\in X$ and $x^*_n(x)=0, \text{ for all } n$, then $x=0$.
\end{description}
\end{prop}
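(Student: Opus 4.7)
The plan is to invoke the classical construction of a \emph{Markushevich basis}, a double-bootstrap induction that builds $(x_n)$ and $(x_n^*)$ in alternating steps.

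First I would use separability to fix two countable auxiliary objects: a dense sequence $(y_n) \subset X \setminus \{0\}$, and a countable total family $(z_n^*) \subset X^*$. The latter is produced by Hahn--Banach: for each $n$, pick a norming functional $z_n^* \in X^*$ with $\|z_n^*\|=1$ and $z_n^*(y_n) = \|y_n\|$; totality of $(z_n^*)$ is then a short continuity-plus-density argument (if $z_n^*(x)=0$ for all $n$, approximate $x$ by a subsequence $y_{n_k} \to x$ and use $\|y_{n_k}\| = z_{n_k}^*(y_{n_k}-x) \to 0$).

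Next I would carry out the main induction producing $(x_n, x_n^*)$ so that $x_i^*(x_j) = \delta_{ij}$ is preserved at every stage, interleaving two types of moves. At a \emph{vector move}, take the first unused $y_n$ that is not in the current span $\mathrm{span}(x_1,\ldots,x_k)$ and orthogonalize it against the functionals built so far by setting $x_{k+1} = y_n - \sum_{j \leq k} x_j^*(y_n)\, x_j$; this ensures $x_j^*(x_{k+1})=0$ for $j \leq k$. At a \emph{functional move}, take the first unused $z_n^*$ with $z_n^*(x_{k+1}) \neq 0$ (which exists by totality of $(z_n^*)$), subtract off its $x_j$-components to get $w^* = z_n^* - \sum_{j \leq k} z_n^*(x_j)\, x_j^*$, and normalize to obtain $x_{k+1}^* = w^*/w^*(x_{k+1})$. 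Alternating the two moves ensures every $y_n$ eventually lies in $\overline{\mathrm{span}}(x_k)$ and every $z_n^*$ in $\mathrm{span}(x_k^*)$. Property $(i)$ then holds by construction, and for $(ii)$: if $x_n^*(x)=0$ for all $n$, then since every $z_n^*$ is a finite linear combination of the $x_k^*$'s, we also have $z_n^*(x)=0$ for all $n$, whence $x=0$ by totality of $(z_n^*)$.

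The main obstacle is the bookkeeping that prevents the induction from stalling: one must arrange the alternation and the ``first unused'' rule carefully so that at each functional move there really is a $z_n^*$ detecting $x_{k+1}$ (which uses totality together with $x_{k+1} \neq 0$), and so that every $y_n$ and every $z_n^*$ is eventually incorporated. Since the statement is precisely the classical Markushevich basis theorem, in the actual writeup one simply invokes the reference \cite{Lindenstrauss}.
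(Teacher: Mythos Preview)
The paper does not prove this proposition at all; it simply states it as a well-known fact with a citation to \cite{Lindenstrauss}. Your proposal ultimately does the same thing (``one simply invokes the reference''), and the Markushevich-basis sketch you provide beforehand is indeed the standard construction, so your approach is correct and consistent with the paper's.
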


\begin{prop}\label{main} Let $X$ be a separable Banach space and $(x_n)_{n\in \mathbb{N}}$, $(x_n^*)_{n\in \mathbb{N}}$ be sequences as in Proposition \ref{CBS}. Let $\Omega=\{x\in X : |x^*_n(x)|<R_n\}$. Then there exists a choice of the sequence $R_n, \ n=1,2,\dots$ such that, the set $\Omega$ is open, convex, unbounded and does not contain any half-line.
\end{prop}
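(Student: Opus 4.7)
The plan is to pick the radii explicitly as $R_n = n\|x_n^*\|$ (positive since $x_n^*(x_n)=1$ forces $\|x_n^*\|>0$) and then verify the four required properties. Convexity of $\Omega$ is immediate, since each slab $\{x:|x_n^*(x)|<R_n\}$ is convex and $\Omega$ is their intersection.

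The key observation that forces the shape of the construction is that $\Omega$ contains no half-line for \emph{any} choice of $R_n$. Indeed, if $\{x_0+tu:t>0\}\subseteq\Omega$ with $u\ne 0$, then for every $n$ the affine map $t\mapsto x_n^*(x_0)+tx_n^*(u)$ would have to stay bounded in absolute value by $R_n$ for all $t>0$, which forces $x_n^*(u)=0$; part $(ii)$ of Proposition \ref{CBS} then yields $u=0$, a contradiction. Unboundedness is also easy with this choice: the points $y_n=(R_n/2)x_n$ lie in $\Omega$ (since $x_m^*(y_n)=(R_n/2)\delta_{mn}$), and $\|y_n\|\ge |x_n^*(y_n)|/\|x_n^*\|=n/2\to\infty$.

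The main obstacle is openness, and it is where the precise growth rate $R_n=n\|x_n^*\|$ matters. Given $x_0\in\Omega$ and $v$ with $\|v\|<\delta$, one wants $|x_n^*(x_0+v)|<R_n$ for every $n$, which reduces to the bound $\delta<(R_n-|x_n^*(x_0)|)/\|x_n^*\|=n-|x_n^*(x_0)|/\|x_n^*\|$ holding uniformly in $n$. Using $|x_n^*(x_0)|\le\|x_n^*\|\cdot\|x_0\|$ the right-hand side is at least $n-\|x_0\|$, so it exceeds $1$ for all but finitely many $n$, while the remaining finitely many ratios are strictly positive because $x_0\in\Omega$. Taking the minimum of these finitely many positive numbers (with $1$) produces a single $\delta>0$ such that $B(x_0,\delta)\subseteq\Omega$. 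I expect this balancing to be the only real difficulty: $R_n$ has to grow fast enough relative to $\|x_n^*\|$ for openness (a single ball must dodge \emph{all} the slab boundaries simultaneously), while $R_n\|x_n\|\to\infty$, which holds here because $\|x_n\|\|x_n^*\|\ge 1$, is what secures unboundedness.
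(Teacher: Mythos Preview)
Your argument is correct and essentially identical to the paper's: the same choice $R_n=n\|x_n^*\|$, the same split of the openness estimate into finitely many indices handled individually and a tail where $n-\|x_0\|>1$, and the same unboundedness witnesses $(R_n/2)x_n$. The only cosmetic difference is that for the ``no half-line'' part you argue directly for an arbitrary half-line $\{x_0+tu:t>0\}$ (and note it works for any $R_n$), whereas the paper first reduces to half-lines emanating from $0$ via Observation~\ref{indip}; your route is slightly more direct but the core idea is the same.
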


\begin{proof} We can choose any sequence $\epsilon_n$ such that $\epsilon_n\longrightarrow +\infty$ (for example $\epsilon_n=n$) and then we define $R_n=\epsilon_n\|x^*_n\|$.

\begin{description}

\item{$(i)$} Let $x_0\in \Omega$; since $\epsilon_n\longrightarrow +\infty$, there is $n_0\in \mathbb{N}$ such that $\epsilon_n-\|x_0\|>1$ for all $n>n_0$. Now we fix a $\delta$, such that

\begin{equation*}
0<\delta<min\{1,\frac{R_n-|x^*_n(x_0)|}{\|x^*_n\|}, \ n=1,\dots,n_0\}
\end{equation*}

Let $u\in X, \text{ with } \|u\|< \delta. \text{ For all } N \in \{1,\dots,n_0\}$ we have

\begin{equation*}\delta<min\{\frac{R_n-|x^*_n(x_0)|}{\|x^*_n\|}, \ n=1,\dots,n_0\} \leq \frac{R_N-|x^*_N(x_0)|}{\|x^*_N\|}.
\end{equation*}

This implies

\begin{equation}\label{finite} 
\delta\|x^*_N\|< R_N - |x^*_N(x_0)| \text{ and } |x^*_N(x_0)|+\delta\|x^*_N\|< R_N.
\end{equation}

Hence for all $n=1,\dots,n_0$, it holds

\begin{equation*} 
|x^*_n(x_0+u)| \leq |x^*_n(x_0)|+|x^*_n(u)| \leq |x^*_n(x_0)|+\|x^*_n\|\|x_0\|< |x^*_n(x_0)|+\delta \|x^*_n\|
\end{equation*}

and from the (\ref{finite}) we finally have $|x^*_n(x_0+u)|<R_n$, for all $n=1,\dots, n_0$.\\
Now let $n>n_0$ and $u\in X$ with $\|u\|<\delta$. Since $\delta<1<\epsilon_n-\|x_0\|$ it follows

\begin{equation*}
|x^*_n(u)| \leq \delta\|x^*_n\|<\|x^*_n\|\left(\epsilon_n-\|x_0\|\right) \leq R_n-|x^*_n(x_0)|,
\end{equation*}

\begin{equation}\label{infinite} 
\text{and } |x^*_n(x_0)|+|x^*_n(u)|<R_n
\end{equation}

Therefore, $|x^*_n(x_0+u)|\leq |x^*_n(x_0)|+|x^*_n(u)|<R_n$, for all $n>n_0$. We have shown that for all $u\in X$, with $\|u\|<\delta$, $x_0+u\in \Omega$; that is, $B(x_0,\delta)\subseteq \Omega$. Since the element $x_0$ was arbitrary, the set $\Omega$ is open. 

\item{$(ii)$} Clearly the set $\Omega$ is convex.

\item{$(iii)$} We define the sequence
\begin{equation*} a_n=\dfrac{R_n}{2|x^*_n(x_n)|}x_n.
\end{equation*}
It is obvious that, $a_n\in \Omega \text{, for all }  n$, because

\[
   |x^*_m(a_n)| =
  \begin{cases}
    R_n/2 & ,\text{if $n=m$} \\
    0 & ,\text{if $n\neq m$}.
  \end{cases}
\]

On the other hand

\begin{equation*} \|a_n\|=\dfrac{R_n}{2|x^*_n(x_n)|}\|x_n\|\geq \dfrac{\epsilon_n\|x^*_n\|}{2\|x^*_n\|\|x_n\|}\|x_n\|=\dfrac{\epsilon_n}{2}\longrightarrow +\infty,
\end{equation*}

 as $n\longrightarrow +\infty$. Hence the $\Omega$ is unbounded.
 
\item{$(iv)$} Obviously the set $\Omega$ contains zero. Since $\Omega$ is open, convex and unbounded, from the Observation \ref{indip} it suffices to prove that there is no half-line which starts from 0 and is contained in $\Omega$. Assume that there is  $u_0\in X$ with $u_0\neq 0$ such that $\{tu_0, t>0\} \subseteq \Omega$.\\
Fix $N\in \mathbb{N}$; now $|x_N^*(u_0)|=|x_N^*(tu_0)|t^{-1}<R_Nt^{-1}\longrightarrow 0$; as $t\longrightarrow +\infty$; thus $x_n^*(u_0)=0, \text{ for all } n \in \mathbb{N}$. According to Proposition \ref{CBS} it follows that $u_0=0$, which is a contradiction. Hence there is no half-line in $\Omega$.
\end{description}
\end{proof}

\begin{prop}\label{sub} Let $X$ be a Banach space, containing a subset $\Omega$ which is open, convex, unbounded and does not contain any half-line. Let $Y$ be a dense linear subspace of $X$. Then the set $\Omega_Y=\Omega \cap Y$ is also open in $Y$, convex, unbounded and does not contain any half-line.
\end{prop}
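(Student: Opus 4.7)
The plan is to verify the four required properties (open in $Y$, convex, unbounded, free of half-lines) in turn, and to observe that three of them are immediate while only the unboundedness needs a real argument.

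Openness of $\Omega_Y$ in $Y$ is automatic: $\Omega_Y = \Omega \cap Y$ is the intersection of an open subset of $X$ with $Y$, which is open in the subspace topology on $Y$. Convexity is equally immediate, since both $\Omega$ and the linear subspace $Y$ are convex and convexity is preserved under intersection. For the absence of half-lines, note that a half-line contained in $\Omega_Y$ has the form $\{y_0 + tv : t > 0\}$ with $y_0, v \in Y$ and $\|v\|=1$; since $Y \subseteq X$, this is also a half-line in $X$ contained in $\Omega$, contradicting the hypothesis on $\Omega$.

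The only step that requires work is unboundedness, and this is where density of $Y$ enters. The plan is to start from a sequence $(w_n) \subseteq \Omega$ with $\|w_n\| \longrightarrow +\infty$, to use openness of $\Omega$ to fix $\delta_n > 0$ with $B(w_n, \delta_n) \subseteq \Omega$, and then to use density of $Y$ in $X$ to choose $y_n \in Y$ with $\|y_n - w_n\| < \min\{1, \delta_n\}$. Then $y_n \in \Omega \cap Y = \Omega_Y$, and by the reverse triangle inequality $\|y_n\| \geq \|w_n\| - 1 \longrightarrow +\infty$, so $\Omega_Y$ is unbounded.

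The main obstacle is this unboundedness step: one cannot simply restrict the unbounded sequence $(w_n) \subseteq \Omega$ to $Y$, because a priori none of the $w_n$ lies in $Y$. The trick is to combine openness of $\Omega$ with density of $Y$ in order to perturb each $w_n$ by less than $1$ into $Y$ while staying inside $\Omega$, which preserves both membership in $\Omega$ and the divergence of the norms. Everything else follows at once from the definitions.
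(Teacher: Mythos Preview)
Your proof is correct and follows essentially the same approach as the paper: openness, convexity, and the absence of half-lines are dismissed as immediate, while unboundedness is obtained by combining openness of $\Omega$ with density of $Y$ to perturb far-away points of $\Omega$ into $Y$ (the paper uses the $M$-definition of unboundedness rather than a sequence, but the argument is identical).
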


\begin{proof}
\begin{description}

\item{$(i)$} Clearly the set $\Omega_Y$ is open and convex.

\item{$(ii)$} The set $\Omega_Y$ is unbounded. Indeed let $M>0$, since $\Omega$ is unbounded there is $a$ in $\Omega$ with $\|a\|>M+1$. However, the set $\Omega$ is open; thus, there exists $0<\delta<1/2$ such that $B(a,\delta)\subseteq \Omega$. Since $Y$ is dense subspace of $X$, it follows $Y\cap B(a,\delta)\neq \varnothing$; hence, there is $b\in Y\cap B(a,\delta)\subseteq \Omega_Y$. From the triangle inequality it is obvious that $\|b\|> M$.

\item{$(iii)$} The set $\Omega_Y$ does not contain any half-line, because, if there is a half-line which is contained in $\Omega_Y$ then the same half-line is contained also in $\Omega$, a contradiction.
\end{description}
\end{proof}

\begin{theor} Every separable normed space contains an open, convex, unbounded set which does not contain any half-line.
\end{theor}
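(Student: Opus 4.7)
The plan is to reduce the general separable normed space to the separable Banach space case already covered by Proposition \ref{main}, and then use Proposition \ref{sub} as the transfer tool.

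First, I would start with a separable normed space $Z$ and pass to its completion $\widehat{Z}$, which is a Banach space. Because a countable dense subset of $Z$ remains countable and dense in $\widehat{Z}$, the completion is also separable. Moreover, $Z$ sits isometrically inside $\widehat{Z}$ as a dense linear subspace, which is precisely the hypothesis needed to apply Proposition \ref{sub} afterwards.

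Next, I would invoke Proposition \ref{main} on $\widehat{Z}$. Since $\widehat{Z}$ is a separable Banach space, Proposition \ref{CBS} supplies a biorthogonal system $(x_n)$, $(x_n^*)$, and then Proposition \ref{main} yields an open, convex, unbounded subset $\Omega \subseteq \widehat{Z}$ which contains no half-line. Finally, I would apply Proposition \ref{sub} with $X = \widehat{Z}$ and $Y = Z$: the intersection $\Omega_Z \defeq \Omega \cap Z$ is then automatically open in $Z$, convex, unbounded, and free of half-lines, which is exactly the desired set.

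There is no serious obstacle; the content lies entirely in Propositions \ref{main} and \ref{sub}, and the present theorem is just the observation that these two combine via completion. The only small point to check carefully is that separability is preserved under completion (which is immediate) and that the isometric inclusion $Z \hookrightarrow \widehat{Z}$ is a genuine dense linear subspace, so that Proposition \ref{sub} legitimately applies.
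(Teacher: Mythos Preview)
Your proposal is correct and follows exactly the paper's approach: pass to the completion $\widehat{Z}$, which is a separable Banach space, obtain the set $\Omega$ there via Proposition~\ref{main}, and then intersect back with $Z$ using Proposition~\ref{sub}. The paper's proof is the same argument, stated in two sentences.
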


\begin{proof} The completion of every separable normed linear space is a separable Banach space. Now the result follows from Proposition \ref{sub}.
\end{proof}

\section{The general case} %NEWSECTION!!!!!!!!!!!

\begin{prop}\label{uncount} Every infinite dimensional Banach space $X$ contains an open, convex, unbounded set which does not contains any half-line.
\end{prop}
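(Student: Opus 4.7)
The plan is to reduce to the separable case handled by Proposition \ref{main}. Since $X$ is infinite dimensional, I would start by choosing a linearly independent sequence $(y_n)_{n\in\mathbb{N}}$ in $X$ and setting $Y$ to be the closed linear span of $\{y_n\}$. Then $Y$ is a separable, infinite-dimensional, closed subspace of $X$, hence a separable Banach space in its own right. Applying Proposition \ref{main} inside $Y$ produces an open, convex, unbounded subset $\Omega_Y \subseteq Y$ that contains no half-line.

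Next I would ``thicken'' $\Omega_Y$ to a subset of $X$ by defining
\[
\Omega \defeq \Omega_Y + B_X(0,1) = \{y + b : y \in \Omega_Y,\ \|b\|_X < 1\}.
\]
Convexity is immediate from convexity of the two summands; openness follows since $\Omega = \bigcup_{y \in \Omega_Y}(y+B_X(0,1))$ is a union of open translates of the unit ball; and unboundedness follows from the inclusion $\Omega_Y \subseteq \Omega$ together with the unboundedness of $\Omega_Y$.

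The main obstacle, and the heart of the proof, is to show that $\Omega$ contains no half-line. I would argue by contradiction: assume $\{z_0 + tu_0 : t>0\} \subseteq \Omega$ with $u_0 \neq 0$, and for each $t>0$ write $z_0 + tu_0 = y_t + b_t$ with $y_t \in \Omega_Y$ and $\|b_t\| < 1$. Then
\[
\frac{y_t}{t} \;=\; u_0 + \frac{z_0 - b_t}{t} \;\longrightarrow\; u_0 \qquad \text{in } X \text{ as } t \to +\infty,
\]
and since $y_t/t \in Y$ with $Y$ closed in $X$, this forces $u_0 \in Y$. The same identity gives $\|y_t\| \to +\infty$ and, by continuity of the norm, $y_t/\|y_t\| \to u_0/\|u_0\| \in Y$. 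Hence for any sequence $t_n \to +\infty$, the points $w_n \defeq y_{t_n} \in \Omega_Y$ satisfy hypothesis $(i)$ of Theorem \ref{ithusd} applied to $\Omega_Y$ inside the Banach space $Y$. The implication $(i)\Rightarrow(ii)$ then yields a half-line in $\Omega_Y$, contradicting the construction of $\Omega_Y$.

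The essential difficulty is that thickening by $B_X(0,1)$ could a priori create half-lines whose directions escape $Y$; the resolution is that the closedness of $Y$ forces the direction of any would-be half-line in $\Omega$ to actually lie in $Y$, after which Theorem \ref{ithusd} transplants such a half-line back into $\Omega_Y$ and produces the contradiction.
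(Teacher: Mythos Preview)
Your proof is correct and follows essentially the same approach as the paper: choose a separable closed (Banach) subspace $Y$, use Proposition~\ref{main} to obtain $\Omega_Y$, thicken by $B_X(0,1)$, and rule out half-lines in $\Omega$ by showing their direction must lie in $Y$ (via closedness) and then invoking Theorem~\ref{ithusd} to contradict the choice of $\Omega_Y$. The only cosmetic difference is that the paper first uses Observation~\ref{indip} to reduce to half-lines emanating from $0$, whereas you work directly with a general starting point $z_0$; both are fine.
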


\begin{proof} Let $Y$ be a separable Banach subspace of $X$. We already know from Proposition \ref{main} that there is a subset $\Omega_Y$ of $Y$, which is open in $Y$, convex, unbounded and does not contain any half-line. We define the set $\Omega = \Omega_Y + B(0,1)=\{w+b \ | w\in \Omega_Y\ \text{ and } b\in B(0,1) \}$ and we will prove that the set $\Omega$ is open in $X$, convex, unbounded and does not contain any half-line.
\begin{description}
\item{$(i)$} Since

 \begin{equation*} \Omega =\bigcup_{w\in \Omega_Y} B(w,1) \text{ the set $\Omega$ is open.} \end{equation*}

\item{$(ii)$} Let $x_1=w_1+b_1,x_2=w_2+b_2\in \Omega$, where $w_1,w_2\in \Omega_Y, \ b_1,b_2\in B(0,1)$ and $0\leq \lambda,\mu\leq 1$ with $\lambda+\mu=1$. Since $B(0,1)$ and $\Omega_Y$ are both convex, it follows that

\begin{equation*} \lambda x_1+\mu x_2=\left(\lambda w_1+\mu w_2\right)+\left(\lambda b_1+\mu b_2\right)\in \Omega. \end{equation*}

Thus, $\Omega$ is convex.

\item{$(iii)$} The set $\Omega_Y$ is unbounded and $\Omega$ contains it. Therefore, $\Omega$ is also unbounded.

\item{$(iv)$} We recall that $0\in \Omega_Y\subseteq \Omega$. Since the $\Omega$ is open, convex and unbounded, from the Observation \ref{indip} it suffices to prove that $\Omega$ does not contain any half-line which starts from the zero. Assume that there is $u_0\in X$ with $\|u_0\|=1$, such that $\{tu_0\ | \ t>0\}\subseteq \Omega$. Since $tu_0\in \Omega, \text{ for all } t>0$ we have that

\begin{equation*}tu_0=w_t+b_t,\end{equation*}

where $w_t \in \Omega_Y$ and $b_t \in B(0,1)$.

For $t=n$ we can observe the following

\begin{itemize}

\item{$(i)$} $w_n\in \Omega_Y, \text{ for all } n$

\item{$(ii)$} $\|w_n\|=\|nu_0-b_n\|\geq n\|v_0\|-1\longrightarrow +\infty$, as $n\longrightarrow +\infty$ \\
$\text{ hence } \|w_n\|\longrightarrow +\infty, \text{ as } n\longrightarrow +\infty$

\item{$(iii)$} Finally

\begin{equation*} \dfrac{w_n}{\|w_n\|}=\dfrac{nu_0-b_n}{\|w_n\|}=\dfrac{nu_0}{\|n u_0 - b_n\|}+\dfrac{b_n}{\|w_n\|}.
\end{equation*}

Now it is obvious that

\begin{equation*}\dfrac{nu_0}{\|n u_0- b_n\|}=\dfrac{u_0}{\| u_0-b_n/n\|}\longrightarrow \dfrac{u_0}{\|u_0\|}=u_0
\end{equation*}

and

\begin{equation*}
 \dfrac{b_n}{\|w_n\|} \longrightarrow 0, \text{ as } n\longrightarrow +\infty.
\end{equation*}

Hence $w_n/\|w_n\|\longrightarrow u_0$, as $n\longrightarrow +\infty$ and since $Y$ is a Banach space, it follows that $u_0\in Y$. In particular the sequence $w_n/\|w_n\|$ has a limit point in $Y$.

\end{itemize}
By the proven facts $(i), (ii), (iii)$ and according to Theorem \ref{ithusd} we deduce that the open in $Y$ set $\Omega_Y$ contains a half-line, which is a contradiction.
\end{description}

\end{proof}

\begin{theor} \label{nonsepar} Every infinite dimensional normed linear space contains an open, convex, unbounded set which does not contain any half-line.
\end{theor}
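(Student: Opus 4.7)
The plan is to reduce the general case to the Banach case already handled by Proposition \ref{uncount}, using the completion of $X$ together with the restriction result Proposition \ref{sub}. The key observation is that an infinite dimensional normed space $X$ sits as a dense linear subspace of its completion $\widehat{X}$, which is a Banach space, and this completion must itself be infinite dimensional (otherwise $\widehat{X}$ would be finite dimensional and hence complete, forcing $X = \widehat{X}$, contradicting that $X$ is infinite dimensional).

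First I would form $\widehat{X}$ and apply Proposition \ref{uncount} to obtain a subset $\widehat{\Omega} \subseteq \widehat{X}$ which is open in $\widehat{X}$, convex, unbounded, and contains no half-line. Next, I would invoke Proposition \ref{sub} with the ambient Banach space being $\widehat{X}$ and with the dense linear subspace being $Y = X$. This immediately yields that $\Omega \defeq \widehat{\Omega} \cap X$ is open in $X$, convex, unbounded, and contains no half-line. Since $\Omega \subseteq X$, this is the desired set.

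There is really no hard step here; the whole content sits in the two previous results. The only point that needs a quick sanity check is the infinite dimensionality of $\widehat{X}$ (handled above) and the fact that a half-line in $\Omega \subseteq X$ would also be a half-line in $\widehat{\Omega} \subseteq \widehat{X}$, which is immediate because inclusion preserves the half-line property and the norm on $X$ is the restriction of the norm on $\widehat{X}$. The main obstacle, if one can call it that, is simply recognizing that Proposition \ref{sub} was stated in exactly the generality needed to make this reduction work in one line.

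\begin{proof}
Let $\widehat{X}$ be the completion of $X$. Since $X$ is infinite dimensional and finite dimensional normed spaces are complete, $\widehat{X}$ must also be infinite dimensional. By Proposition \ref{uncount}, there exists a set $\widehat{\Omega} \subseteq \widehat{X}$ which is open in $\widehat{X}$, convex, unbounded, and does not contain any half-line. Since $X$ is a dense linear subspace of the Banach space $\widehat{X}$, Proposition \ref{sub} applies and gives that $\Omega \defeq \widehat{\Omega} \cap X$ is open in $X$, convex, unbounded, and does not contain any half-line. This $\Omega$ is the required subset of $X$.
\end{proof}
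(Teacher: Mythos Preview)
Your proof is correct and follows exactly the approach the paper takes: the paper's proof simply says ``The proof follows by a combination of Propositions \ref{uncount} and \ref{sub},'' and you have spelled out precisely that combination via the completion $\widehat{X}$. The only cosmetic remark is that the infinite dimensionality of $\widehat{X}$ is immediate from $X\subseteq \widehat{X}$, so the detour through completeness of finite dimensional spaces is unnecessary.
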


\begin{proof} The proof follows by a combination of Propositions \ref{uncount} and \ref{sub}.
\end{proof}

We have proved the following theorem.

\begin{theor} A normed linear space is finite dimensional if and only if every open, convex unbounded subset of $X$ is a union of parallel half-lines.
\end{theor}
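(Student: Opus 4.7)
The plan is to observe that this final theorem is a direct synthesis of the two main results already established in the paper, one for each direction of the equivalence, so the proof will consist almost entirely of citing them. The main obstacle, such as it is, will just be to make sure the logical reading of "every open, convex unbounded subset of $X$ is a union of parallel half-lines" is correctly negated in the contrapositive of one direction.

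For the forward implication, I would assume $X$ is finite dimensional and simply invoke Theorem \ref{finite!}, which states verbatim that in a finite dimensional normed space every open, convex, unbounded subset is a union of parallel half-lines. No further work is needed.

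For the reverse implication, I would argue by contrapositive: assume $X$ is infinite dimensional and produce an open, convex, unbounded subset of $X$ which is not a union of parallel half-lines. By Theorem \ref{nonsepar}, there exists an open, convex, unbounded subset $\Omega \subseteq X$ containing no half-line at all. In particular $\Omega$ cannot be written as a union of parallel half-lines, since any such union would contain (indeed be) a half-line. This gives the desired counterexample, completing the contrapositive.

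I expect no real obstacle here; the theorem is essentially a restatement of the conjunction of Theorems \ref{finite!} and \ref{nonsepar}, and the only care needed is to note that "contains no half-line" is strictly stronger than "is not a union of half-lines," so the infinite dimensional counterexample from Proposition \ref{uncount} suffices a fortiori to rule out being a union of parallel half-lines.
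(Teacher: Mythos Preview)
Your proposal is correct and matches the paper's own proof, which simply states that the result follows by combining Theorems \ref{finite!} and \ref{nonsepar}. Your added remark that ``contains no half-line'' is strictly stronger than ``is not a union of parallel half-lines'' is a helpful clarification but does not change the approach.
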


\begin{proof} The proof follows by a combination of Theorems \ref{finite!} and \ref{nonsepar}.
\end{proof}

\textbf{ Acknowledgment:} We would like to thank D. Gatzouras, A. Giannopoulos and S. Mercourakis for helpful communications.

%\addcontentsline{toc}{section}{References}

\section*{E-mails and addresses}
Dionysios Moschonas\\
National and Kapodistrian University of Athens, Department of Mathematics\\
Panepistimiopolis, 157 84, Athens, Greece\\
e-mail: dmoschon@math.uoa.gr\\
\text{}\\
Vassili Nestoridis\\
National and Kapodistrian University of Athens, Department of Mathematics\\
Panepistimiopolis, 157 84, Athens, Greece\\
e-mail: vnestor@math.uoa.gr\\
\text{}\\
Aleksios Terezakis\\
National and Kapodistrian University of Athens, Department of Mathematics\\
Panepistimiopolis, 157 84, Athens, Greece\\
e-mail: aleksistere@gmail.com


\begin{thebibliography}{9}

\bibitem{Kolmogorov} 
A.N. Kolmogorov,
\textit{ On inequalities between upper bounds of consecutive derivatives of an arbitrary function on an infinite interval}, 
 Amer. Math. Soc. Transl. Ser. 1, Amer. Math. Soc, Providence, p. 233–243, 1962
 

\bibitem{Lindenstrauss} 
J. Lindenstrauss, L. Tzafriri,
\textit{Classical Banach spaces I and II}. 
Springer-Verlag, 1996.
 

\bibitem{Nestoridis}
D. Moschonas, V. Nestoridis,
\textit{Non-extendability of holomorphic functions with bounded or continuously extendable derivatives},
arXiv:1709.00276.


\end{thebibliography}
\end{document}